\newcommand{\cD}{{\mathcal D}}
\newcommand{\bN}{{\mathbb N}}
\newcommand{\bR}{{\mathbb R}}
\newcommand{\bS}{{\mathbb S}}
\newcommand{\bC}{{\mathbb C}}
\newcommand{\bZ}{{\mathbb Z}}
\newtheorem{definition}{definition}[section]
\newtheorem{theorem}{Theorem}[section]
\newtheorem{proposition}{Proposition}[section]
\numberwithin{equation}{section}
\begin{document}

\title{Distributions associated to homogeneous distributions}

\author{A.~V.~Kosyak}
\address{Institute of Mathematics, Ukrainian National Academy of Sciences,
3 Tere\-shchenkivs'ka, Kyiv, 01601, Ukraine.}
\email{kosyak01@yahoo.com, kosyak@imath.kiev.ua}

\author{V.~I.~Polischook}
\address{Dept. of Mathematics, St. Petersburg State Polytechnical University,
Polytechnicheskaya 29, 195251, St. Petersburg, Russia}
\email{polischook@list.ru}

\author{V.~M.~Shelkovich}
\address{Department of Mathematics, St.-Petersburg State Architecture
and Civil Engineering University, 2 Krasnoarmeiskaya 4, 190005,
St. Petersburg, Russia.}
\email{shelkv@vs1567.spb.edu}

\thanks{The authors would like to thank the Institute of Applied Mathematics,
University of Bonn for the hospitality. The first and the third authors were
supported in part by DFG Project AL 214/41-1.}

\subjclass[2000]{46F10}

\date{}

\keywords{Homogeneous distributions, distributions associated to homogeneous
distributions, Euler type system of differential equations}

\begin{abstract}
In this paper we continue to study {\it quasi associated homogeneous
distributions \rm{(}generalized functions\rm{)}} which were introduced
in the paper by V.M. Shelkovich, Associated and quasi associated homogeneous
distributions (generalized functions), J. Math. An. Appl., {\bf 338}, (2008), 48-70.
For the multidimensional case we give the characterization of these
distributions in the terms of the dilatation operator $U_{a}$ (defined as
$U_{a}f(x)=f(ax)$, $x\in \bR^n$, $a >0$) and its generator
$\sum_{j=1}^{n}x_j\frac{\partial }{\partial x_j}$.
It is proved that $f_k\in {\cD}'(\bR^n)$ is a quasi associated homogeneous
distribution of degree~$\lambda$ and of order $k$ if and only if
$\bigl(\sum_{j=1}^{n}x_j\frac{\partial}{\partial x_j}-\lambda\bigr)^{k+1}f_{k}(x)=0$,
or if and only if $\bigl(U_a-a^\lambda I\bigr)^{k+1}f_k(x)=0$, $\forall \, a>0$,
where $I$ is a unit operator. The structure of a quasi associated homogeneous
distribution is described.
\end{abstract}

\maketitle

\setcounter{equation}{0}

\section{Introduction}
\label{s1}

\subsection{Homogeneous distributions.\/}\label{s1.1}
Let us consider one-parameter multiplicative group $G$ of the {\em dilation operators}
$U_{a}$, $a\in\bR_{+}=\{a\in\bR, a>0\}$, which acts in $C^\infty(\bR^n\setminus\{0\})$
as follows:
$$
C^\infty(\bR^n\setminus\{0\})\ni \phi(x) \to U_a\phi(x)=\phi(ax)=\phi(ax_1,\dots,ax_n), \quad x\ne 0,
\quad a\in\bR_+.
$$

Non zero function $\phi\in C^\infty(\bR^n\setminus\{0\})$ is \emph{eigenfunction}
of a group $G$ if $U_{a}\phi=\mu_a\phi$ for all $a>0$.
If $\mu_a\equiv 1$ for all $a>0$, such an invariant function $\phi$ is
called {\it homogeneous}.

\begin{proposition}
\label{hom-functions}
A function $\phi$ is eigenfunction of a group $G$ if and only if
it can be represented in the form $\phi=r^\lambda h$, where
$r(x)=\|x\|$, $\lambda\in\bC$ and $h\ne 0$ is a homogeneous function.
Moreover, to any eigen-element $\phi$ corresponds a unique $\lambda\in \bC$
such that $\mu(a)=a^\lambda$ for all $a>0$, where $\bC$ is the set of complex numbers.
\end{proposition}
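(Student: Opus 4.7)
\medskip
\noindent\textbf{Proof proposal.} The plan is to separate the two implications and exploit the group structure of $G$ to pin down $\mu_a$.

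For the easy (``if'') direction, suppose $\phi=r^\lambda h$ with $h$ homogeneous (meaning $U_a h=h$). Then
\[
U_a\phi(x)=\phi(ax)=\|ax\|^\lambda h(ax)=a^\lambda r(x)^\lambda h(x)=a^\lambda\phi(x),
\]
so $\phi$ is an eigenfunction with eigenvalue $\mu_a=a^\lambda$.

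For the ``only if'' direction, assume $U_a\phi=\mu_a\phi$ for all $a>0$, with $\phi\not\equiv 0$. First I would show that $a\mapsto\mu_a$ is a multiplicative character of the form $a^\lambda$. Since $G$ is a group, $U_{ab}=U_aU_b$, so applying both sides to $\phi$ gives $\mu_{ab}\phi=\mu_a\mu_b\phi$, and as $\phi\not\equiv 0$ we deduce $\mu_{ab}=\mu_a\mu_b$. Pick a point $x_0\in\bR^n\setminus\{0\}$ with $\phi(x_0)\ne 0$; then $\mu_a=\phi(ax_0)/\phi(x_0)$ is smooth in $a>0$ because $\phi\in C^\infty(\bR^n\setminus\{0\})$. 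A smooth multiplicative homomorphism $\bR_+\to\bC^*$ must have the form $\mu_a=a^\lambda$ for a unique $\lambda\in\bC$: indeed, writing $\psi(t)=\log\mu_{e^t}$ one gets a smooth additive homomorphism $\bR\to\bC$, which is necessarily linear, $\psi(t)=\lambda t$; uniqueness of $\lambda$ follows from differentiating $\mu_a=a^\lambda$ at $a=1$.

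Once this is in hand, define $h(x):=r(x)^{-\lambda}\phi(x)$ on $\bR^n\setminus\{0\}$. Then
\[
U_a h(x)=\|ax\|^{-\lambda}\phi(ax)=a^{-\lambda}r(x)^{-\lambda}\cdot a^\lambda\phi(x)=h(x),
\]
so $h$ is homogeneous, $h\not\equiv 0$, and $\phi=r^\lambda h$ is the desired representation.

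The only non-routine ingredient is the classification $\mu_a=a^\lambda$, i.e.\ showing that a smooth (or even just measurable) multiplicative character of $\bR_+$ has this form; this is the step I would expect to do most carefully, since everything else is a direct computation. Regularity is available because $\phi\in C^\infty(\bR^n\setminus\{0\})$, so the argument via the logarithmic substitution $a=e^t$ applies immediately.
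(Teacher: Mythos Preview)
Your argument is correct and follows essentially the same route as the paper: both prove sufficiency by direct computation, and for necessity both pick a point where $\phi$ does not vanish to see that $\mu_a$ is regular, use the group law to get $\mu_{ab}=\mu_a\mu_b$, classify the character as $a^\lambda$, and then factor out $r^\lambda$. The only cosmetic differences are that the paper invokes continuity rather than smoothness of $\mu_a$ (either suffices), and it obtains $h$ by setting $a=1/r(x)$ to write $h(x)=\phi(x/r(x))$, which is exactly your $r(x)^{-\lambda}\phi(x)$.
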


\begin{proof}
Sufficiency of the representation $\phi(x)=r^\lambda h(x)$ is evident.
Suppose that $0\ne\phi\in C^\infty(\bR^n\setminus\{0\})$
and $\phi(ax)=\mu_a\phi(x)$ for all $x\ne 0$ and $a>0$.
Let $\phi(x_*)\ne 0$. Since $\phi$ is a continuous function then $f(a)=\phi(ax_*)$
is continuous in $\bR_+$. As far as $f(a)=\mu_a\phi(x_*)$, we conclude that a function
$\mu_a$ is continuous. Next, we have
$$
\mu_{ab}\phi(x_*)=\phi(abx_*)=\mu_{a}\phi(bx_*)=\mu_{a}\mu_{b}\phi(x_*)\,.
$$
Since $\phi(x_*)\ne 0$, therefore $\mu_{ab}=\mu_{a}\mu_{b}$.
All continuous non-zero solutions of this equation have the form $\mu_{a}=a^\lambda$,
where $\lambda \in \bC$. Thus, $\phi(ax)=a^\lambda\phi(x)$ for all $a>0$ and for any $x\ne 0$,
where $\lambda$ is fixed for any function $\phi$. Setting $a=1/r(x)$, we obtain the representation
$$
\phi(x)=r^\lambda(x)\phi\Big(\frac{x}{r(x)}\Big)=r^\lambda(x)h(x)\,,
$$
where $h(x)=\phi(x/r(x))\ne 0$ is a {\em homogeneous function}.
\end{proof}

According to Proposition~\ref{hom-functions}, a \emph{spectral subspace} of a group $G$
(of order zero) corresponding to the number $\lambda$ is the following
$$
S_0(\lambda)=\bigcap_{a>0}\ker(U_a-a^\lambda I),
$$
where $I$ is a unit operator.

We shall now carry some of the preceding results to the case of distributions.
\begin{definition}
\label{de2} \rm
(~\cite[Ch.I,\S 3.11.,(1)]{G-Sh},~\cite[3.2.]{Hor})
A distribution $f_0\in {\cD}'(\bR^n)$ is said to be {\it homogeneous {\rm(}HD{\rm)}}
of degree~$\lambda$ if for any $a >0$ and $\varphi \in {\cD}(\bR^n)$ we have
$$
\Bigl\langle f_0(x),\varphi\Big(\frac{x}{a}\Big) \Bigr\rangle
=a^{\lambda+n} \bigl\langle f_0(x),\varphi(x) \bigr\rangle,
$$
i.e.,
\begin{equation}
\label{1*}
U_{a}f_0(x)=f_0(ax)=a^{\lambda}f_0(x), \quad x\in \bR^n.
\end{equation}
\end{definition}

The Euler differential operator
\begin{equation}
\label{17.1***-00}
\sum_{j=1}^{n}x_j\frac{\partial }{\partial x_j}
\end{equation}
is a generator of one-parameter group of dilatation $G=\{U_{a}:a>0\}$.

Recall a well-known Euler theorem.
\begin{theorem}
\label{th5}
{\rm(}see~{\rm\cite[Ch.III,\S 3.1.]{G-Sh}}{\rm)}
A distribution $f_{0}\in {\cD}'(\bR^n)$ is homogeneous of degree~$\lambda$ if and only if
it satisfies the Euler equation
$$
\sum_{j=1}^{n}x_j\frac{\partial f_{0}}{\partial x_j}=\lambda f_{0}.
$$
\end{theorem}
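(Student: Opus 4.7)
The plan is a standard argument that moves between the integrated form of homogeneity (the action of $U_a$) and its infinitesimal form (the Euler equation) by differentiating in the group parameter $a$ at $a=1$.

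For necessity, assume $f_0$ is homogeneous of degree $\lambda$. Testing against $\varphi\in\cD(\bR^n)$, Definition~\ref{de2} gives
$\langle f_0(x),\varphi(x/a)\rangle = a^{\lambda+n}\langle f_0,\varphi\rangle$ for every $a>0$. I would differentiate both sides in $a$ at $a=1$. On the right I get $(\lambda+n)\langle f_0,\varphi\rangle$, and on the left the $C^\infty$ chain rule yields $-\langle f_0,\sum_j x_j\partial_j\varphi\rangle$. Using the distributional identity $\langle\sum_j x_j\partial_j f_0,\varphi\rangle = -n\langle f_0,\varphi\rangle - \langle f_0,\sum_j x_j\partial_j\varphi\rangle$ (obtained by transposing each $x_j\partial_j$ onto $\varphi$), the two pieces combine to $\langle\sum_j x_j\partial_j f_0,\varphi\rangle = \lambda\langle f_0,\varphi\rangle$, which is the Euler equation.

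For sufficiency, assume the Euler equation. For a fixed $\varphi\in\cD(\bR^n)$ set
\[
F(a) = \bigl\langle f_0(x),\varphi(x/a)\bigr\rangle, \qquad a>0.
\]
A direct computation gives $\partial_a\varphi(x/a) = -a^{-1}\sum_j (x_j/a)(\partial_j\varphi)(x/a) = -a^{-1}\sum_j x_j\partial_j\bigl[\varphi(x/a)\bigr]$, so $F'(a) = -a^{-1}\langle f_0,\sum_j x_j\partial_j\theta_a\rangle$ where $\theta_a(x)=\varphi(x/a)$. Transposing as above and applying the hypothesis $\sum_j x_j\partial_j f_0 = \lambda f_0$ gives $F'(a) = (\lambda+n)F(a)/a$. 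The unique solution with the obvious initial value $F(1)=\langle f_0,\varphi\rangle$ is $F(a)=a^{\lambda+n}\langle f_0,\varphi\rangle$, i.e.\ the integrated homogeneity relation from Definition~\ref{de2}.

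The only nontrivial technical point is justifying that $F(a)$ is $C^1$ in $a$ and that differentiation can be exchanged with the duality pairing. This is a standard fact: the map $a\mapsto \varphi(\cdot/a)$ is $C^\infty$ from $\bR_+$ into $\cD(\bR^n)$ (the supports stay in a fixed compact subset of $\bR_+\cdot\mathrm{supp}\,\varphi$ on any compact $a$-interval, and all derivatives in $x$ converge uniformly), so continuity of $f_0$ on $\cD$ legitimizes each differentiation. I expect this to be the only step requiring any care; everything else is bookkeeping with the identity $\langle\sum_j x_j\partial_j f_0,\varphi\rangle = -\langle f_0, n\varphi+\sum_j x_j\partial_j\varphi\rangle$.
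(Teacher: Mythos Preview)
Your argument is correct and is the standard proof one finds in the references. Note, however, that the paper does not actually supply its own proof of this statement: Theorem~\ref{th5} is only \emph{recalled} as a well-known result, with a citation to Gel$'$fand--Shilov, and is then used as the base case $k=0$ in the proof of Theorem~\ref{th6-1}. So there is nothing in the paper to compare against directly.

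That said, your method is very much in the spirit of what the paper does for the generalization. In the proof of Theorem~\ref{th6-1} the authors also differentiate the dilation relation at $a=1$ to obtain the Euler-type system (your necessity step), and for the converse they fix a test function, form an auxiliary function of $a$ (their $g_k(a)$, your $F(a)$), compute its $a$-derivative, reduce to a first-order linear ODE in $a$, and read off the answer from the unique solution with the correct value at $a=1$. Your proof is exactly the $k=0$ instance of that scheme, carried out cleanly on the test-function side, and your remark about the smoothness of $a\mapsto\varphi(\cdot/a)$ in $\cD(\bR^n)$ is the right justification for interchanging $d/da$ with the pairing.
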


\subsection{Associated homogeneous distributions.\/}\label{s1.2}
A concept of an {\it associated homogeneous distribution
{\rm(}AHD{\rm)}} (for the one-dimensional case) was first introduced
and studied in the book~\cite[Ch.I,\S 4.1.]{G-Sh}. Later, {\it AHDs}
were studied in~\cite{E-K1},~\cite{E-K2}.
AHDs were introduced by the following way.

It is naturally to study {\it associated functions} of various
orders~\cite[Ch.I,\S 4.1.]{G-Sh} of an arbitrary linear transformation $U$.

Functions $f_1,f_2,\dots,f_k,\dots$
are said to be {\it associated with the eigenfunction} $f_0$ of the
transformation $U$  if
\begin{equation}
\label{1}
\begin{array}{rcl}
\displaystyle
Uf_0&=&cf_0, \\
\displaystyle
Uf_k&=&cf_k+df_{k-1}, \quad k=1,2,\dots, \\
\end{array}
\end{equation}
where $c$, $d$ are constants. Consequently, $U$ reproduces an
{\it associated function} of $k$th order up to some {\it associated
function} of $(k-1)$th order.

Since according to Definition~\ref{de2}, HD of degree~$\lambda$
is an eigenfunction of {\it any} dilatation operator $U_{a}$,
$a>0$, with the eigenvalue $a^{\lambda}$, in the
book~\cite[Ch.I,\S 4.1.]{G-Sh}, by analogy with Definition~(\ref{1}),
the following definition is introduced: a function $f_1(x)$ is said
to be {\it associated homogeneous} of order $1$ and of degree~$\lambda$
if for any $a >0$
\begin{equation}
\label{1.1}
f_1(ax)=a^{\lambda}f_1(x)+h(a)f_{0}(x),
\end{equation}
where $f_{0}$ is a homogeneous function of degree $\lambda$. Here,
in view of (\ref{1*}) and (\ref{1}), $c=a^{\lambda}$. Next,
in~\cite[Ch.I,\S 4.1.]{G-Sh} it is proved that up to a constant factor
$h(a)=a^{\lambda}\log{a}$.
Thus, by setting in the relation (\ref{1.1}) $c=a^{\lambda}$ and
$d=h(a)=a^{\lambda}\log{a}$, Definition~(\ref{1.1}) takes the following form:

\begin{definition}
\label{de3} \rm
(~\cite[Ch.I,\S4.1.,(1),(2)]{G-Sh})
A distribution $f_1\in {\cD}'(\bR)$ is called {\it associated
homogeneous {\rm(}AHD{\rm)}} of order $1$ and of
degree~$\lambda$ if for any $a >0$ and $\varphi \in {\cD}(\bR)$
$$
\Bigl\langle f_1,\varphi\Big(\frac{x}{a}\Big) \Bigr\rangle
=a^{\lambda+1} \bigl\langle f_1,\varphi \bigr\rangle
+ a^{\lambda+1}\log{a}\bigl\langle f_{0},\varphi \bigr\rangle,
$$
i.e.,
$$
U_{a}f_1(x)=f_1(ax)=a^{\lambda}f_1(x)+a^{\lambda}\log{a}f_{0}(x),
$$
where $f_{0}$ is a homogeneous distribution of degree $\lambda$.
\end{definition}

In~\cite{Sh}, it was proved that there exist {\it only\/} AHDs of order
$k=0$, i.e., HDs (given by Definition~\ref{de2}) and of order $k=1$
(given by Definition~(\ref{1.1}) or Definition~\ref{de3}).

\subsection{Quasi associated homogeneous distribution.}\label{s1.3}
In view of the above facts, in~\cite{Sh}, a definition of {\it quasi associated
homogeneous distribution {\rm(}QAHD{\rm)}} was introduced, which is a natural
generalization of the notion of an {\it associated eigenvector} (\ref{1}).

\begin{definition}
\label{de6} \rm
A distribution $f_k\in {\cD}'(\bR)$ is said to be {\it quasi associated
homogeneous\/} of degree~$\lambda$ and of order $k$, $k=0,1,2,3,\dots$
if for any $a >0$ and $\varphi \in {\cD}(\bR)$
$$
\Bigl\langle f_k(x),\varphi\Big(\frac{x}{a}\Big) \Bigr\rangle
=a^{\lambda+1} \langle f_k(x),\varphi(x) \rangle
+\sum_{r=1}^{k}h_r(a)\langle f_{k-r}(x),\varphi(x) \rangle,
$$
i.e.,
\begin{equation}
\label{50}
U_{a}f_k(x)=f_k(ax)=a^{\lambda}f_k(x)+\sum_{r=1}^{k}h_r(a) f_{k-r}(x),
\quad \quad x\in \bR,
\end{equation}
where $f_{k-r}(x)$ is a QAHD of degree $\lambda$ and of order
$k-r$, \ $h_r(a)$ is a differentiable function, $r=1,2,\dots,k$.
Here for $k=0$ we suppose that sums in the right-hand sides of the
above relations are empty.
\end{definition}

According to~\cite[Theorem~3.2.]{Sh}, in order to introduce
a QAHD of degree $\lambda$ and order $k$ instead of Definition~\ref{de6}
one can use the following definition.

\begin{definition}
\label{de7} \rm
A distribution $f_{k}\in {\cD}'(\bR)$ is called a {\it QAHD} of
degree~$\lambda$ and of order $k$, $k=0,1,2,\dots$,
if for any $a >0$ and $\varphi \in {\cD}(\bR)$
\begin{equation}
\label{17**}
\Bigl\langle f_{k}(x),\varphi\Big(\frac{x}{a}\Big) \Bigr\rangle
=a^{\lambda+1} \bigl\langle f_{k}(x),\varphi(x) \bigr\rangle
+\sum_{r=1}^{k}
a^{\lambda+1}\log^r{a}\bigl\langle f_{k-r}(x),\varphi(x) \bigr\rangle,
\end{equation}
i.e.,
\begin{equation}
\label{17.1**}
U_{a}f_k(x)=f_k(ax)=a^{\lambda}f_{k}(x)+\sum_{r=1}^{k}a^{\lambda}\log^r{a} f_{k-r}(x),
\quad \quad x\in \bR,
\end{equation}
where $f_{k-r}(x)$ is an QAHD of degree $\lambda$ and of order $k-r$,
\ $r=1,2,\dots,k$. Here for $k=0$ we suppose that the
sums in the right-hand sides of (\ref{17**}), (\ref{17.1**}) are empty.
\end{definition}

In~\cite{Sh} the following multidimensional analog of Definition~\ref{de7} was introduced.
\begin{definition}
\label{de9} \rm
We say that a distribution $f_{k} \in {\cD}'(\bR^n)$ is a {\it QAHD\/}
of degree $\lambda$ and of order~$k$, \ $k=0,1,2,\dots$, if for any
$a >0$ we have
\begin{equation}
\label{17.1***}
U_{a}f_k(x)=f_{k}(ax)
=a^{\lambda}f_{k}(x)+\sum_{r=1}^{k}a^{\lambda}\log^r{a} f_{k-r}(x),
\quad x\in \bR^n,
\end{equation}
where $f_{k-r}(x)$ is a QAHD of degree $\lambda$ and
of order $k-r$, \ $r=1,2,\dots,k$. (Here we suppose that for $k=0$ the
sum in the right-hand side of (\ref{17.1***}) is empty.)
\end{definition}

\section{Quasi associated homogeneous distributions and nilpotency}
\label{s2}

\subsection{Structure of the spectral subspace.}\label{s2.1}
Let us define the {\em spectral subspace} of order $k\in \bZ_+=\{0,1,2\dots\}$
corresponding to $\lambda$:
$$
S_k(\lambda)=\bigcap_{a>0}\ker(U_a-a^\lambda I)^{k+1}, \quad k\in \bZ_+.
$$
It is clear that
$$
S_0(\lambda)\subseteq S_1(\lambda)\subseteq\dots
\subseteq S_k(\lambda)\subseteq\dots
$$

Below we describe all functions which belong to the spectral space $S_k(\lambda)$.

\begin{theorem}
\label{reduction}
Let $f\in {\cD}'(\bR^n\setminus\{0\})$. Then $\phi\in S_k(\lambda)$ if and only if
$f(x)=r^\lambda g(x)$, where a distribution $g\in S_k(0)$, $k\in \bZ_+$.
\end{theorem}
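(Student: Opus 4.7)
The plan is to reduce the theorem to a single intertwining identity between the dilatation operator and multiplication by $r^\lambda$, and then invoke that $r^\lambda$ is a smooth nonvanishing multiplier on $\bR^n\setminus\{0\}$. Let $M_{r^\lambda}$ denote the operator of multiplication by $r^\lambda$ on $\cD'(\bR^n\setminus\{0\})$. Since $r(ax)=a\,r(x)$ for $a>0$, one verifies directly that
\begin{equation*}
U_a\bigl(r^\lambda g\bigr)(x)=(ar(x))^\lambda\,g(ax)=a^\lambda r^\lambda(x)\,(U_a g)(x),
\end{equation*}
i.e., $U_a M_{r^\lambda}=a^\lambda M_{r^\lambda} U_a$. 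Subtracting $a^\lambda M_{r^\lambda}$ from both sides gives the key commutation relation
\begin{equation*}
\bigl(U_a-a^\lambda I\bigr)M_{r^\lambda}=a^\lambda M_{r^\lambda}\bigl(U_a-I\bigr).
\end{equation*}

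The main step is then a straightforward induction on $k$: applying the above relation $k+1$ times (and noting that multiplication by $a^\lambda$ is a scalar that commutes with everything in sight) yields
\begin{equation*}
\bigl(U_a-a^\lambda I\bigr)^{k+1}M_{r^\lambda}=a^{\lambda(k+1)}\,M_{r^\lambda}\bigl(U_a-I\bigr)^{k+1}.
\end{equation*}
Because $r^\lambda$ is a smooth nonvanishing function on $\bR^n\setminus\{0\}$, the operator $M_{r^\lambda}$ is a bijection on $\cD'(\bR^n\setminus\{0\})$ with inverse $M_{r^{-\lambda}}$. Hence, for any $g\in\cD'(\bR^n\setminus\{0\})$ and any $a>0$, the distribution $(U_a-a^\lambda I)^{k+1}(r^\lambda g)$ vanishes if and only if $(U_a-I)^{k+1}g$ vanishes.

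Taking the intersection over $a>0$ gives exactly the stated equivalence: $f=r^\lambda g\in S_k(\lambda)$ iff $g\in S_k(0)$. For the converse direction, given an arbitrary $f\in S_k(\lambda)$, simply set $g:=r^{-\lambda}f\in\cD'(\bR^n\setminus\{0\})$; then $f=r^\lambda g$ and the same computation shows $g\in S_k(0)$. There is no genuine obstacle here—the whole proof is the commutation identity plus the observation that $M_{r^\lambda}$ is invertible on $\cD'(\bR^n\setminus\{0\})$; the only point that would require care in a fuller write-up is justifying the product $r^\lambda g$ and the intertwining at the level of distributions, but both are immediate since $r^{\pm\lambda}\in C^\infty(\bR^n\setminus\{0\})$.
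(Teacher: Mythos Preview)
Your proof is correct and follows essentially the same approach as the paper: both derive the intertwining identity $(U_a-a^\lambda I)\,r^\lambda=a^\lambda r^\lambda(U_a-I)$, iterate it to obtain $(U_a-a^\lambda I)^{k+1}r^\lambda=a^{\lambda(k+1)}r^\lambda(U_a-I)^{k+1}$, and then use the invertibility of multiplication by $r^\lambda$ on $\cD'(\bR^n\setminus\{0\})$ to conclude in both directions.
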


\begin{proof}
Let us define by $\varDelta_a(\lambda)$ the operator $U_a-a^\lambda I$.
For operator of multiplication by the function $r^\lambda$ and its inverse
we will use functional symbols $r^\lambda$ and $r^{-\lambda}$, respectively.

For any distribution $f\in {\cD}'(\bR^n\setminus\{0\})$ we have
$$
(\varDelta_a(\lambda)r^\lambda f)(x)=
\big(U_a r^\lambda f-a^\lambda r^\lambda f\big)(x)
=r^\lambda(ax) f(ax)-a^\lambda r^\lambda(x) f(x)
$$
$$
=a^\lambda r^\lambda(x)(f(ax)-f(x))
=a^\lambda\big(r^\lambda(U_a-I)f\big)(x)
=(a^\lambda r^\lambda\varDelta_a(0)f)(x)\,.
$$
Thus
\begin{equation}
\label{commut}
\varDelta_a(\lambda)r^\lambda=a^\lambda r^\lambda\varDelta_a(0)\,.
\end{equation}
By successive iterations of \eqref{commut} we obtain that
\begin{equation}
\label{iter}
\varDelta_a^{k+1}(\lambda)r^\lambda
=a^{\lambda (k+1)}r^\lambda\varDelta_a^{k+1}(0)\,.
\end{equation}

If $f(x)=r(x)^\lambda g(x)$, where a distribution $g\in S_k(0)$, then \eqref{iter} implies that
$$
\varDelta_a^{k+1}(\lambda)f=\varDelta_a^{k+1}(\lambda)r^\lambda g
=\big(\varDelta_a^{k+1}(\lambda)r^\lambda\big)g
=a^{\lambda (k+1)}r^\lambda\varDelta_a^{k+1}(0)g=0,
$$
i.e., $f\in\ker\varDelta_a^{k+1}(\lambda)$ for any $a>0$, and consequently, $f\in S_k(\lambda)$.
Conversely, let $g(x)=r(x)^{-\lambda}f(x)\in {\cD}'(\bR^n\setminus\{0\})$, where a distribution
$g\in S_k(\lambda)$. According to \eqref{iter}, for any $a>0$ we have
$$
\varDelta_a^{k+1}(0)g=a^{-\lambda (k+1)}r^{-\lambda}\varDelta_a^{k+1}(\lambda)r^\lambda g
=a^{-\lambda (k+1)}r^{-\lambda}\varDelta_a^{k+1}(\lambda)f=0\,,
$$
i.e., a distribution $g\in S_k(0)$.
\end{proof}

Thus, the problem of describing the space $S_k(\lambda)$ is reduced to a particular case $\lambda=0$.

\begin{theorem}
\label{direct}
Let $f\in {\cD}'(\bR^n\setminus\{0\})$. $f\in S_k(0)$ if and only if
\begin{equation}
\label{repr-0}
f(x)=\sum_{j=0}^{k}h_j(x)\log^jr,
\end{equation}
where $h_j$, $j=0,1,\dots,k$ are homogeneous distributions.
\end{theorem}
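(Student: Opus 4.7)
The plan is to prove both implications separately. For sufficiency I carry out a direct calculation, while for necessity I argue by induction on $k$, using the Euler generator $E=\sum_{j=1}^n x_j\frac{\partial}{\partial x_j}$ of the group $\{U_a\}$ together with Theorem~\ref{th5}.

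For sufficiency, I fix $a>0$ and observe that when $h$ is homogeneous of degree $0$ (so $U_ah=h$) one has $(U_a-I)(h\log^jr)=h\bigl((\log r+\log a)^j-\log^jr\bigr)$, which is a sum of terms of the form $(\text{degree-}0\text{ homogeneous})\cdot\log^mr$ with $m\le j-1$. Thus $U_a-I$ strictly decreases the ``$\log r$-degree'' on the linear span of products $h\log^jr$ with $j\le k$ and $h$ homogeneous of degree $0$; iterating $k+1$ times gives $(U_a-I)^{k+1}f=0$ for every $f$ of the form \eqref{repr-0}, and since this holds for every $a>0$, such $f$ lies in $S_k(0)$.

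For necessity I induct on $k$. The base case $k=0$ is immediate from Definition~\ref{de2}: $U_af=f$ for every $a>0$ exactly says $f=h_0$ is homogeneous of degree $0$. For the inductive step I exploit the identity $\frac{d}{ds}U_{e^s}f=U_{e^s}Ef=EU_{e^s}f$, checked by pairing with test functions where everything is smooth. Writing $V_s:=U_{e^s}$ and differentiating the assumed identity $(V_s-I)^{k+1}f=0$ in $s$ produces, after collecting terms using that $V_s$ commutes with $V_s-I$ and with $E$, the equation $(k+1)V_s(V_s-I)^kEf=0$; applying $V_{-s}=V_s^{-1}$ yields $(V_s-I)^kEf=0$ for every $s\in\bR$, i.e., $Ef\in S_{k-1}(0)$. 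The induction hypothesis now gives $Ef=\sum_{j=0}^{k-1}\tilde h_j\log^jr$ with each $\tilde h_j$ homogeneous of degree $0$.

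To reconstruct $f$ itself I set $p:=\sum_{j=1}^{k}\frac{\tilde h_{j-1}}{j}\log^jr$. The identities $E\tilde h_{j-1}=0$ and $E(\log^jr)=j\log^{j-1}r$, together with the Leibniz rule, give $Ep=Ef$; hence $E(f-p)=0$, and Theorem~\ref{th5} forces $h_0:=f-p$ to be homogeneous of degree $0$. This yields the desired expansion \eqref{repr-0} with every $h_j$ homogeneous of degree $0$. The main technical obstacle I anticipate is justifying the $s$-differentiation of the distribution-valued identity $(V_s-I)^{k+1}f=0$: expanding the binomial as a finite linear combination of $V_{ms}=V_s^m$ reduces this to smoothness of $s\mapsto V_{-ms}\varphi$ as a map into $\cD(\bR^n\setminus\{0\})$, which is standard.
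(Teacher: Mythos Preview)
Your argument is correct. The sufficiency direction matches the paper's computation essentially verbatim: both observe that $U_a-I$ maps $h\log^jr$ (with $h$ homogeneous of degree $0$) into the span of $h\log^mr$, $m\le j-1$, and iterate.

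For necessity, however, you take a genuinely different route. The paper passes to polar--logarithmic coordinates, writing $\widetilde f(s,\omega)=f(e^s\omega)$, expands $(U_a-I)^{k+1}f=0$ via the binomial formula, applies $(a\,d/da)^{k+1}$ at $a=1$, and uses the combinatorial identity $\sum_{m=0}^{N}(-1)^{N-m}\binom{N}{m}m^{N}=N!$ to conclude $D^{k+1}\widetilde f=0$; the polynomial structure in $s$ then gives \eqref{repr-0}. Your argument instead stays in the original variables and is cleanly inductive: differentiating $(V_s-I)^{k+1}f=0$ once in $s$ and using that $E$ commutes with $V_s$ yields $(V_s-I)^kEf=0$, so $Ef\in S_{k-1}(0)$; the induction hypothesis together with an explicit antiderivative $p$ and Theorem~\ref{th5} finishes. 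This avoids both the change of variables and the combinatorial identity, and it links the result directly to the Euler operator, foreshadowing Theorem~\ref{th6-1}. The paper's approach, on the other hand, makes the polynomial-in-$\log r$ structure emerge in one stroke rather than term by term. One minor point worth making explicit in your write-up: you invoke Theorem~\ref{th5} on $\bR^n\setminus\{0\}$ rather than on $\bR^n$ as stated, but the same proof applies there without change.
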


\begin{proof}
Suppose that representation (\ref{repr-0}) holds.
For $k=0$ the statement is right, since a function $f=h_0$ is
homogeneous, and, consequently, $\varDelta_a(0)f=(U_a-I)h_0=0$.
Let us assume that for $k-1$ the statement holds. For $k$ we have
$$
f(x)=\sum_{j=0}^{k}h_j(x)\log^jr=h_k(x)\log^kr+\sum_{j=0}^{k-1}h_j(x)\log^jr,
\quad x\in \bR^n\setminus\{0\}\,,
$$
where (in view of our induction assumption) the last summand belongs to the subspace
$S_{k-1}(0)\subseteq S_k(0)$. Let us examine that $\eta=\varDelta_a(0)h_k\log^kr \in S_{k-1}(0)$
for any $a>0$. Indeed,
$$
\eta(x)=\big((U_a-I)h_k\log^kr\big)(x)=h_k(ax)\log^kr(ax)-h_k(x)\log^kr(x)=
$$
$$
=h_k(x)\big((\log r(x)+\log a)^k-\log^kr(x)\big)=
\sum_{j=0}^{k-1}\binom{k}{j}h_k(x)\log^jr(x)\log^{k-j}a\,.
$$
Thus, $\eta$ is a linear combination of functions
$h_k\log^jr$ $(j\le k-1)$ which by an induction assumption belongs to $S_{k-1}(0)$.
Therefore, $\varDelta_a^{k+1}(0)h_k\log^kr=\varDelta_a^{k}(0)\eta=0$,
i.e., $h_k\log^kr\in S_k(0)$.

Conversely, let a distribution $f\in S_k(0)$. Denote by $\omega$ the homogeneous function
$x\mapsto x/r(x)$. Here $\omega\in \bS^{n-1}$, where $\bS^{n-1}$ is a unit sphere in $\bR^n$.
Setting $\widetilde{f}(s,\omega)=\phi(\exp(s)\omega)$, one can see that a distribution
$f$ can be represented as $f(x)=\widetilde{f}(\log r,\omega)$, where
$\widetilde{f}\in {\cD}'(\bR\times \bS^{n-1})$.
Therefore,
$$
0=\varDelta_a^{k+1}(0)f=(U_a-I)^{k+1}f
=\sum_{m=0}^{k+1}\binom{k+1}{m}(-1)^{k+1-m}U_a^m f
\qquad\qquad
$$
$$
\qquad
=\sum_{m=0}^{k+1}\binom{k+1}{m}(-1)^{k+1-m}\widetilde{f}(m\log a+\log r,\omega)\,.
$$
Denote by $D$ the operator of differentiation of a function $\widetilde{f}$ with
respect of the first argument.
Next, applying the operator $(a\frac{d}{da})^{k+1}$ to the right- and left-hand side of the
last relation, setting $a=1$ and taking into account the identity
$$
\sum_{m=0}^N(-1)^{N-m}\binom{N}{m}m^N=N!\,,
$$
we obtain
$$
0=\sum_{m=0}^{k+1}\binom{k+1}{m}(-1)^{k+1-m}m^{k+1}D^{k+1}\widetilde{f}(\log r,\omega)=
(k+1)!D^{k+1}\widetilde{f}(\log r,\omega)\,.
$$
Thus, $D^{k+1}\widetilde{f}(s,\omega)=0$ for all $s\in \bR$.
Therefore, according to~\cite[\S 3.3.]{Vladimirov},
$\widetilde{f}(s,\omega)=\sum_{j=0}^{k}c_j(\omega)s^j$.
Since $\widetilde{f}\in {\cD}'(\bR\times \bS^{n-1})$, we conclude that
$$
c_j(\omega)=D^j\widetilde{f}(0,\omega)/j!\in {\cD}(\bS^{n-1})\,.
$$
Thus, $f(x)=\widetilde{f}(\log r,\omega)=\sum_{j=0}^{k}c_j(x/r(x))\log^jr$.
Denoting by $h_j$ homogeneous distribution $c_j(\omega)$,
we obtain the representation (\ref{repr-0}).

The theorem is proved.
\end{proof}

Theorems~\ref{reduction},~\ref{direct} imply the following statement.

\begin{theorem}
\label{th-1-spectr}
Let $f\in {\cD}'(\bR^n\setminus\{0\})$.
A distribution $f\in S_{k}(\lambda)$ if and only if
\begin{equation}
\label{repr-1}
f(x)=r^\lambda\sum_{j=0}^{k}h_j(x)\log^jr, \quad x\in \bR^n\setminus\{0\}\,,
\end{equation}
where $h_j$ are homogeneous distributions, $j=0,1,\dots,k$, \, $k\in \bZ_+$.
\end{theorem}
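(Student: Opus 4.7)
The plan is to obtain Theorem~\ref{th-1-spectr} as an immediate combination of the two preceding theorems, which have already done all the real work. Theorem~\ref{reduction} converts membership in $S_k(\lambda)$ to membership in $S_k(0)$ via multiplication by $r^\lambda$, and Theorem~\ref{direct} characterizes the elements of $S_k(0)$ as linear combinations of homogeneous distributions times powers of $\log r$. So the remaining task is purely to chain these two equivalences.

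More concretely, I would argue as follows. Suppose first that $f \in S_k(\lambda)$. By Theorem~\ref{reduction}, $f(x)=r^\lambda g(x)$ for some $g \in S_k(0)$. Applying Theorem~\ref{direct} to $g$ yields homogeneous distributions $h_0,\dots,h_k$ with
\[
g(x)=\sum_{j=0}^{k}h_j(x)\log^j r,
\]
and multiplying through by $r^\lambda$ gives exactly the representation \eqref{repr-1}. For the converse, assume $f$ is of the form \eqref{repr-1}. Set $g(x)=\sum_{j=0}^{k}h_j(x)\log^j r$; by Theorem~\ref{direct}, $g \in S_k(0)$, and then by Theorem~\ref{reduction}, $f=r^\lambda g \in S_k(\lambda)$.

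There is essentially no obstacle here, since both directions reduce to invoking the previously proved theorems. The only point that warrants a line of care is to note that $r^\lambda$ is invertible as a multiplication operator on ${\cD}'(\bR^n\setminus\{0\})$, so that the passage from $f$ to $g=r^{-\lambda}f$ stays within the class of distributions considered in Theorems~\ref{reduction} and~\ref{direct}; this is already implicit in the statement of Theorem~\ref{reduction}, so no additional argument is needed. The proof therefore amounts to a short paragraph consisting of the two substitutions above.
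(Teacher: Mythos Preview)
Your proposal is correct and matches the paper's approach exactly: the paper simply states that Theorems~\ref{reduction} and~\ref{direct} imply Theorem~\ref{th-1-spectr}, without writing out any further argument. Your paragraph just spells out the two-step substitution that this sentence summarizes.
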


\subsection{Characterization of QAHD by a dilation operator.}\label{s2.2}

\begin{theorem}
\label{th-2-spectr}
A distribution $f\in {\cD}'(\bR^n\setminus\{0\})\cap S_{k}(\lambda)$ if and only if
$f$ is QAHD of degree $\lambda$ and of order $k$ {\rm(}i.e. satisfies Definition~{\rm\ref{de9})}.
\end{theorem}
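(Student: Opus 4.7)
The plan is to prove both directions by induction on the order $k$, leveraging the structural description of the spectral subspace already established in Theorem~\ref{th-1-spectr}.

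For the forward implication (QAHD $\Longrightarrow f\in S_k(\lambda)$), I would induct on $k$. The base case $k=0$ is immediate: a homogeneous distribution of degree $\lambda$ satisfies $(U_a-a^\lambda I)f_0=0$, so $f_0\in S_0(\lambda)$. For the inductive step, from Definition~\ref{de9} we read off
\[
(U_a-a^\lambda I)f_k=\sum_{r=1}^{k}a^\lambda\log^r a\cdot f_{k-r},
\]
where each $f_{k-r}$ is by hypothesis QAHD of order $k-r\le k-1$. The induction hypothesis gives $f_{k-r}\in S_{k-r}(\lambda)\subseteq S_{k-1}(\lambda)$, so the right-hand side lies in $S_{k-1}(\lambda)$. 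Applying $(U_a-a^\lambda I)^k$ then yields $(U_a-a^\lambda I)^{k+1}f_k=0$ for every $a>0$, i.e.\ $f_k\in S_k(\lambda)$.

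For the reverse implication ($f\in S_k(\lambda)\Longrightarrow$ QAHD), I would start from the explicit representation supplied by Theorem~\ref{th-1-spectr}: $f(x)=r^\lambda\sum_{j=0}^{k}h_j(x)\log^j r$ with $h_j$ homogeneous of degree $0$. Using homogeneity, $h_j(ax)=h_j(x)$ and $r(ax)=ar(x)$, so
\[
U_a f(x)=a^\lambda r^\lambda\sum_{j=0}^{k}h_j(x)(\log r+\log a)^j.
\]
A binomial expansion and regrouping by powers of $\log a$ give
\[
U_a f=a^\lambda f+\sum_{m=1}^{k}a^\lambda\log^m a\cdot f_{k-m},\qquad f_{k-m}:=r^\lambda\sum_{i=0}^{k-m}\binom{i+m}{m}h_{i+m}(x)\log^i r.
\]
Each $f_{k-m}$ has precisely the form from Theorem~\ref{th-1-spectr} with $k$ replaced by $k-m$, so $f_{k-m}\in S_{k-m}(\lambda)$. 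Induction on $k$ (the base case $k=0$ again being trivial, since $f=r^\lambda h_0$ is homogeneous) then shows every $f_{k-m}$ is QAHD of order $k-m$, and hence $f$ satisfies Definition~\ref{de9}.

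The only real technical point is keeping the induction and the bookkeeping of powers of $\log a$ and $\log r$ consistent, i.e.\ verifying that the functions $f_{k-m}$ extracted from the binomial expansion really are QAHDs of the asserted orders. Because Theorem~\ref{th-1-spectr} provides an intrinsic description of $S_j(\lambda)$, this verification is essentially automatic once the expansion is written out, so the argument is a clean two-step induction rather than a hard structural computation.
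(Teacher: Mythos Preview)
Your proposal is correct and matches the paper's proof essentially line for line: the paper proves $S_k(\lambda)\Rightarrow$ QAHD by invoking Theorem~\ref{th-1-spectr}, expanding $(\log r+\log a)^j$ binomially, and reading off the same distributions $f_{k-m}=r^\lambda\sum_{s=0}^{k-m}\binom{m+s}{m}h_{m+s}\log^s r\in S_{k-m}(\lambda)$, while the converse is handled by the same induction you describe, writing $(U_a-a^\lambda I)f_k=\sum_{r=1}^k a^\lambda\log^r a\, f_{k-r}$ and applying $(U_a-a^\lambda I)^k$. The only difference is that the paper presents the two implications in the opposite order.
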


\begin{proof}
Let $f\in S_{k}(\lambda)$. Then, due to Theorem~\ref{th-1-spectr}, a distribution $f$
is represented in the form (\ref{repr-1}). Therefore,
$$
U_af(x)=f(ax)=r(ax)^\lambda\sum_{j=0}^{k}h_j(ax)\log^jr(ax)
\qquad\qquad\qquad\qquad\qquad\qquad\qquad
$$
$$
=a^{\lambda}r(x)^\lambda\sum_{j=0}^{k}h_j(x)\big(\log r(x)+\log a\big)^j
=a^{\lambda}r^\lambda\sum_{j=0}^{k}h_j\sum_{m=0}^{j}\binom{j}{m}\log^{j-m}r\log^ma
$$
\begin{equation}
\label{repr-ashd-1}
=\sum_{m=0}^{k}\bigg(r^\lambda\sum_{j=m}^{k}\binom{j}{m}h_j\log^{j-m}r\bigg)a^{\lambda}\log^ma
=\sum_{m=0}^{k}f_{k-m}a^{\lambda}\log^ma,
\end{equation}
where in view of Theorem~\ref{th-1-spectr},
$$
f_{k-m}=r^\lambda\sum_{s=0}^{k-m}\binom{m+s}{m}h_{m+s}\log^{s}r \in S_{k-m}(\lambda),
\quad m=0,1,\dots,k;
$$
and $f_{k}=r^\lambda\sum_{s=0}^{k}h_{s}\log^{s}r=f$.
Setting in (\ref{repr-ashd-1}) successively $k=1,2,\dots$, we conclude that
$f$ is a QAHD of degree $\lambda$.

Conversely, let $f=f_k$ be a QAHD of degree $\lambda$, i.e., according to (\ref{17.1**}),
\begin{equation}
\label{17.1**-1}
U_{a}f_k(x)=f_k(ax)=a^{\lambda}f_{k}(x)+\sum_{r=1}^{k}a^{\lambda}\log^r{a} f_{k-r}(x)
\quad \text{for any} \quad a >0,
\end{equation}
where $f_{k-r}(x)$ is a QAHD of degree $\lambda$ and of order $k-r$,
\ $r=1,2,\dots,k$.

For $k=1$ we have $U_{a}f_1(x)=a^{\lambda}f_{1}(x)+a^{\lambda}\log{a} f_{0}(x)$,
where $f_{0}$ is HD. Therefore, $(U_a-a^\lambda I)f_1=a^{\lambda}\log{a} f_{0}$,
where $(U_a-a^\lambda I)f_0=0$. Thus, $(U_a-a^\lambda I)^2f_1=0$.

Suppose that $(U_a-a^\lambda I)^{j+1}f_j=0$ for $j=2,3,\dots,k-1$. Then according to
(\ref{17.1**-1}), $(U_a-a^\lambda I)f_k=\sum_{r=1}^{k}a^{\lambda}\log^r{a} f_{k-r}$
for any $a >0$. Taking into account our assumption, one can conclude that
$(U_a-a^\lambda I)^{k+1}f_k=\sum_{r=1}^{k}a^{\lambda}\log^r{a} (U_a-a^\lambda I)^{k}f_{k-r}=0$,
i.e., $f=f_k\in S_{k}(\lambda)$.
\end{proof}

Now we present another proof of the sufficiency in Theorem~\ref{th-2-spectr}.
Denote $f=(f_k)_{k=0}^\infty$, where $f_k\in {\cD}'(\bR^n)$. Let us define
the dilatation operator $U_{a}$ on $f=(f_k)_{k=0}^\infty$ as
\begin{equation}
\label{100}
\big(U_{a}f\big)_k(x)=f_{k}(ax)=f_k(ax_1,\dots,ax_n), \quad x\in \bR^n, \quad k=0,1,2,\dots.
\end{equation}
Here a vector component $f_k$ can be considered as a vector $f_k=(g_0,\dots,g_k,\dots)$,
where $g_j=\delta_{j\,k}$, and  $\delta_{j\,k}$ is the Kronecker symbol.

\begin{proposition}
\label{prop6-2}
If $f_k(x)$ is a QAHD of degree~$\lambda$ and of order $k$ then
\begin{equation}
\label{75-2}
\left(\left(U_a-a^\lambda I\right)^{k+1}f\right)_k=0, \quad \forall \, a>0,
\qquad k=0,1,2,\dots.
\end{equation}
\end{proposition}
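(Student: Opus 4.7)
The plan is to unwind the definitions and carry out a direct binomial expansion in the operator $U_a$, providing a non-inductive alternative to the argument already given in the proof of Theorem~\ref{th-2-spectr}. First I would observe that, because $U_a$ acts componentwise on the sequence $f=(f_k)_{k=0}^\infty$, the $k$-th component of $(U_a-a^\lambda I)^{k+1}f$ coincides with $(U_a-a^\lambda I)^{k+1}f_k$, so it suffices to verify that this distribution vanishes.

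Next I would compute $U_a^m f_k$ for integer $m\ge 0$. Since $U_a^m f_k(x)=f_k(a^m x)$, applying Definition~\ref{de9} with the dilation parameter $a^m$ in place of $a$ and using $\log(a^m)=m\log a$ yields
$$
U_a^m f_k(x)=a^{m\lambda}\Bigl(f_k(x)+\sum_{r=1}^k (m\log a)^r f_{k-r}(x)\Bigr).
$$
Substituting this into the binomial expansion
$$
(U_a-a^\lambda I)^{k+1}=\sum_{m=0}^{k+1}\binom{k+1}{m}(-1)^{k+1-m}a^{\lambda(k+1-m)}U_a^m
$$
and factoring out the common scalar $a^{\lambda(k+1)}$, one sees that the coefficient of each $f_{k-r}$ (for $r=0,1,\dots,k$) reduces, up to the nonzero factor $a^{\lambda(k+1)}(\log a)^r$, to the finite-difference sum $\sum_{m=0}^{k+1}\binom{k+1}{m}(-1)^{k+1-m}m^r$, with the convention $m^0=1$ so that the $r=0$ coefficient is $(1-1)^{k+1}=0$.

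The final step is to invoke the classical identity $\sum_{m=0}^{N}\binom{N}{m}(-1)^{N-m}m^r=0$ for every $r$ with $0\le r<N$ — a statement about iterated forward differences of polynomials of degree strictly below $N$, and the exact companion of the identity $\sum_{m=0}^{N}(-1)^{N-m}\binom{N}{m}m^N=N!$ already used in the proof of Theorem~\ref{direct}. Applying it with $N=k+1$ and $r\in\{0,1,\dots,k\}$ kills every coefficient, so $(U_a-a^\lambda I)^{k+1}f_k=0$ and the conclusion follows.

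The main obstacle is simply the algebraic bookkeeping — collecting coefficients of each $f_{k-r}$ and tracking the powers of $a$ — rather than any conceptual difficulty; the combinatorial identity is classical and, as noted, already in use elsewhere in the paper, so no new machinery is required.
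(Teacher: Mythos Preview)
Your argument is correct. The reduction in the first paragraph to $(U_a-a^\lambda I)^{k+1}f_k=0$ is justified by~\eqref{100}, the formula for $U_a^m f_k$ follows from Definition~\ref{de9} applied with parameter $a^m$ (the distributions $f_{k-r}$ there are fixed once and for all, independent of the dilation parameter), and the combinatorial identity you invoke is the standard vanishing of the $(k+1)$-st forward difference of a polynomial of degree at most $k$.

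Your route, however, differs from the paper's. The paper recasts the componentwise action~\eqref{100} as a matrix action $R_a$ on the full sequence $f=(f_k)$, identifies $R_a=a^\lambda(I-(\log a)T)^{-1}$ in terms of the shift $T=\sum_r E_{r,r+1}$, and then observes that $R_a-a^\lambda I$ contains a factor of $T$, so its $(k+1)$-st power contains $T^{k+1}$, which annihilates the $k$-th component. Your approach is more elementary---no infinite matrices, no geometric series---and pleasantly mirrors the computation in Theorem~\ref{direct}, pairing the identity $\sum_{m=0}^N(-1)^{N-m}\binom{N}{m}m^N=N!$ used there with its companion $\sum_{m=0}^N(-1)^{N-m}\binom{N}{m}m^r=0$ for $r<N$. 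The paper's matrix argument, on the other hand, makes the structural reason for nilpotency transparent: $U_a-a^\lambda I$ acts on the sequence as $a^\lambda\log a$ times a shift (up to an invertible factor), so $k+1$ applications must kill the $k$-th component.
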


\begin{proof}
Suppose that $f_k(x)$ is a QAHD of degree~$\lambda$ and of order $k$, $k\ge 1$.
Then by Definition~\ref{de9} we have
\begin{equation}
\label{101}
\big(U_{a}f\big)_k(x)=a^{\lambda}\Big(\sum_{r=0}^{k}\log^{k-r}{a} f_{r}(x)\Big),
\quad x\in \bR^n, \quad k=0,1,2,\dots.
\end{equation}
Thus the operator $R_a$ in right-hand side of relation (\ref{101}) has the
following matrix form
$$
R_a=a^\lambda
\left(
\begin{array}{cccccc}
1&\log{a}&\log^{2}{a}&\log^{3}{a}&\log^{4}{a}&...\\
0&1&\log{a}&\log^{2}{a}&\log^{3}{a}&...\\
0&0&1&\log{a}&\log^{2}{a}&...\\
0&0&0&1&\log{a}&...\\
0&0&0&0&1&...\\
&&&&&...
\end{array}
\right).
$$
Let $T$ be a operator defined as follows
\begin{equation}
T=
\left(
\begin{array}{cccccc}
0&1&0&0&0&...\\
0&0&1&0&0&...\\
0&0&0&1&0&...\\
0&0&0&0&1&...\\
&&&&&...
\end{array}
\right)
=\sum_{r=0}^\infty E_{r\,r+1},
\end{equation}
where $E_{k\,m}$ is a matrix such that $(E_{k\,m})_{ij}=\delta_{i\,k}\delta_{j\,m}$,
and $\delta_{i\,k}$, $\delta_{j\,m}$ are the Kronecker symbols, $k,m,i,j\in \bN_0$.
It is clear that
\begin{equation}
\label{102}
E_{k\,m}E_{p\,q}=\left\{
\begin{array}{cc}
E_{k\,q},&\,\,\text{\,\,if\quad}m=p,\\
0,     &\,\,\text{\,\,if\quad}m\not=p.
\end{array}
\right.
\end{equation}
In view of (\ref{102}), we have
$$
T^2=\Big(\sum_{r=0}^\infty E_{r\,r+1}\Big)\Big(\sum_{s=0}^\infty E_{s\,s+1}\Big)=
\sum_{r,s=0}^\infty E_{r\,r+1}E_{s\,s+1}=\sum_{r=0}^\infty E_{r\,r+2}.
$$
Similarly, using (\ref{102}), we obtain
$$
T^m=\sum_{r=0}^\infty E_{r\,r+m},\quad m\in \bN.
$$
Using the last relation, the operator $R_a$ in right-hand side of relation (\ref{101})
can be rewritten as
\begin{equation}
\label{103}
R_a=a^\lambda\sum_{s=0}^\infty\log^s{a}\,T^s=a^\lambda\left(1-(\log{a})\,T\right)^{-1}.
\end{equation}

Formulas (\ref{100}), (\ref{101}), (\ref{103}) imply that
$$
R_a-a^\lambda I=a^\lambda\sum_{s=1}^\infty \log^s{a}\,T^s
=a^\lambda\log{a}\,T\sum_{s=0}^\infty \log^s{a}\,T^s.
$$
Hence
$$
(R_a-a^\lambda I)^{k+1}=(a^\lambda\log{a})^{k+1}
T^{k+1}\bigg(\sum_{s=0}^\infty \log^s{a}\,T^s\bigg)^{k+1}.
$$

Since operators $T$ and $\sum_{s=0}^\infty \log^s{a}\,T^s$
commute, taking into account that
$$
(Tf)_k=f_{k-1},\quad (T^2f)_k=f_{k-2},\dots,(T^kf)_k=f_{0},\quad (T^{k+1}f)_k=0,
$$
and $U_af=R_af$ for $f=(f_k)_{k=0}^\infty$, where $f_k$ is a QAHD of degree~$\lambda$
and of order $k$, we get
$$
\left((U_a-a^\lambda I)^{k+1}f\right)_k
=(a^\lambda\log{a})^{k+1}\bigg(T^{k+1}\Big(\sum_{s=0}^\infty \log^s{a}\,T^s\Big)^{k+1}f\bigg)_k
$$
$$
=(a^\lambda\log{a})^{k+1}\bigg(\Big(\sum_{s=0}^\infty\log^s{a}\,T^s\Big)^{k+1}T^{k+1}f\bigg)_k=0,
\quad k=0,1,2,\dots.
$$
\end{proof}

\section{Characterization of QAHD by a differential operator}\label{s3}

The characterization of a multidimensional QAHD by differential operator
(\ref{17.1***-00}) is given by the theorem which generalizes the well-known classical 
the Euler theorem for homogeneous distributions (for example, see~\cite[Ch.III,\S 3.1.]{G-Sh}).
\begin{theorem}
\label{th6-1}
{\rm (~\cite[Theorems~5.2., Remark~5.1.]{Sh})}
$f_k(x)$ is a QAHD of degree~$\lambda$ and of order $k$, $k\ge 1$
if and only if
\begin{equation}
\label{75-1}
\bigg(\sum_{j=1}^{n}x_j\frac{\partial}{\partial x_j}-\lambda\bigg)^{k+1}f_{k}(x)=0.
\end{equation}
\end{theorem}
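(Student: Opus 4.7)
The plan is to use Theorem~\ref{th-2-spectr} to reduce the statement to showing that the two subspaces
\[
\bigcap_{a>0}\ker(U_a-a^\lambda I)^{k+1}\quad\text{and}\quad\ker(E-\lambda)^{k+1},
\]
where $E:=\sum_{j=1}^{n}x_j\,\partial/\partial x_j$ is the Euler operator, coincide. The bridge between the discrete spectral condition and the differential one is the generator identity
\[
a\,\frac{d}{da}\,U_a\,f \;=\; U_a\,Ef \;=\; E\,U_a\,f,
\]
which follows from a one-line chain-rule calculation on $U_af(x)=f(ax)$. This identity is first understood in $\cD'(\bR^n)$ by verifying that for every test function $\varphi$ the scalar function $a\mapsto\langle U_af,\varphi\rangle=a^{-n}\bigl\langle f,\varphi(\cdot/a)\bigr\rangle$ lies in $C^\infty(\bR_+)$ and that $(a\,d/da)\langle U_af,\varphi\rangle=\langle U_aEf,\varphi\rangle$.

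For the direction ``QAHD $\Rightarrow(E-\lambda)^{k+1}f_k=0$'', I would start from Definition~\ref{de7}, written compactly as
\[
U_af_k=\sum_{r=0}^{k}a^\lambda\log^r a\cdot f_{k-r}.
\]
Applying $a\,d/da$ and evaluating at $a=1$, the left-hand side becomes $Ef_k$ by the generator identity, while on the right the coefficient of $f_{k-r}$ is $(a\,d/da)[a^\lambda\log^r a]|_{a=1}=\lambda\delta_{r,0}+\delta_{r,1}$, producing $Ef_k=\lambda f_k+f_{k-1}$, i.e.\ $(E-\lambda)f_k=f_{k-1}$. Since $f_{k-1}$ is itself a QAHD of order $k-1$, iterating yields $(E-\lambda)^j f_k=f_{k-j}$ for $0\le j\le k$, and finally $(E-\lambda)^{k+1}f_k=(E-\lambda)f_0=0$ by Euler's Theorem~\ref{th5} applied to the HD $f_0$.

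For the converse, assume $(E-\lambda)^{k+1}f_k=0$ and introduce the $\cD'$-valued function $\phi(t):=e^{-\lambda t}U_{e^t}f_k$. Using the generator identity together with the fact that $E$ commutes with both $U_{e^t}$ and with multiplication by the scalar $e^{-\lambda t}$, one obtains $\phi'(t)=(E-\lambda)\phi(t)$ and hence $\phi^{(j)}(t)=(E-\lambda)^j\phi(t)$ for every $j$. The hypothesis then gives $\phi^{(k+1)}\equiv0$, so $\phi$ is a (distribution-valued) polynomial in $t$ of degree at most $k$ whose Taylor coefficients at $t=0$ are $(E-\lambda)^j f_k/j!$. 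Translating back via $a=e^t$ yields
\[
U_af_k=a^\lambda\sum_{j=0}^{k}\frac{\log^j a}{j!}\,(E-\lambda)^j f_k,\qquad a>0,
\]
and setting $f_{k-j}:=\frac{1}{j!}(E-\lambda)^j f_k$ reproduces Definition~\ref{de9} verbatim. Each $f_{k-j}$ satisfies $(E-\lambda)^{(k-j)+1}f_{k-j}=0$, so induction on $k$ (base case $k=0$ being Theorem~\ref{th5}) ensures it is a QAHD of order $k-j$, completing the argument.

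The main technical obstacle sits entirely in the converse direction: one must justify that the Taylor expansion of $\phi$, which a priori holds only scalarly in $t$ after testing against each $\varphi\in\cD(\bR^n)$, reads back as an identity in $\cD'(\bR^n)$ for every $a>0$, and that $\phi^{(j)}=(E-\lambda)^j\phi$ holds in the distributional sense. Once the $C^\infty$-dependence of $\langle U_af,\varphi\rangle$ on $a$ is secured and its derivative is identified with $\langle EU_af,\varphi\rangle=\langle U_aEf,\varphi\rangle$, the remaining manipulations are purely formal.
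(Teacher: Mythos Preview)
Your argument is correct. The forward direction---differentiate the dilation relation with respect to $a$, set $a=1$, obtain $(E-\lambda)f_k=f_{k-1}$, and iterate down to the homogeneous $f_0$---is exactly what the paper does (its system~(\ref{75})). One small imprecision: the equality ``$(E-\lambda)^jf_k=f_{k-j}$'' tacitly identifies the order-$(k{-}j)$ term coming from the expansion of $f_{k-j+1}$ with the $f_{k-j}$ appearing in the original expansion of $f_k$; these need not literally coincide, but all you actually use is that each successive $(E-\lambda)$ lands on a QAHD of one lower order, and that is what both you and the paper establish.

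For the converse you take a genuinely different route. You pass to the additive parameter $t=\log a$, set $\phi(t)=e^{-\lambda t}U_{e^t}f_k$, use the generator identity to get $\phi^{(j)}=(E-\lambda)^j\phi$, conclude $\phi^{(k+1)}\equiv0$, and read off the QAHD expansion as the Taylor polynomial of $\phi$. The paper instead works inductively in $a$: assuming $f_{k-1}$ is already a QAHD of order $k-1$, it sets $g_k(a)=U_af_k-a^\lambda f_k-a^\lambda\log a\,f_{k-1}$, derives the first-order ODE $g_k'(a)=\frac{\lambda}{a}g_k(a)+\sum_{r=1}^{k-1}a^{\lambda-1}\log^r a\,f^{(k-1)}_{k-1-r}$ with $g_k(1)=0$, and integrates explicitly. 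Your approach is more conceptual and delivers the full expansion $U_af_k=a^\lambda\sum_{j=0}^k\frac{\log^ja}{j!}(E-\lambda)^jf_k$ in one stroke, while the paper's ODE argument is more elementary but requires the inductive scaffolding and an explicit integration at each step. A minor point: your opening sentence announces a reduction via Theorem~\ref{th-2-spectr}, but you never actually invoke it (and that theorem is stated only for $\cD'(\bR^n\setminus\{0\})$, whereas the present statement lives in $\cD'(\bR^n)$); your self-contained argument avoids this issue, so you should simply drop that sentence.
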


\begin{proof}
Let $f_{k}\in {\cD}'(\bR^n)$ be a QAHD of degree $\lambda$ and of
order~$k$. According to Definition~\ref{de9},
there are QAHD $f_{j}$ of degree $\lambda$ and of order $j$, $j=0,1,2,\dots,k-1$, and
QAHD $f_{k-s-r}^{(k-s)}$ of degree $\lambda$ and of order $k-s-j$, $s=0,1,2,\dots,k-2$,
$r=2,\dots,k-s$ such that
\begin{equation}
\label{76}
\begin{array}{rcl}
\displaystyle
U_{a}f_k(x)&=&a^{\lambda}f_{k}(x)+a^{\lambda}\log{a} f_{k-1}(x)
+\sum_{r=2}^{k}a^{\lambda}\log^r{a} f_{k-r}^{(k)}(x), \medskip \\
\displaystyle
U_{a}f_{k-1}(x)&=&a^{\lambda}f_{k-1}(x)+a^{\lambda}\log{a} f_{k-2}(x)
+\sum_{r=2}^{k-1}a^{\lambda}\log^r{a} f_{k-1-r}^{(k-1)}(x), \medskip \\
\displaystyle
U_{a}f_{k-2}(x)&=&a^{\lambda}f_{k-2}(x)+a^{\lambda}\log{a} f_{k-3}(x)
+\sum_{r=2}^{k-2}a^{\lambda}\log^r{a} f_{k-2-r}^{(k-2)}(x), \medskip \\
\hdotsfor{3} \\
\displaystyle
U_{a}f_1(x)&=&a^{\lambda}f_{1}(x)+a^{\lambda}\log{a} f_{0}(x),
\medskip \\
\displaystyle
U_{a}f_0(x)&=&a^{\lambda}f_{0}(x). \\
\end{array}
\end{equation}
Differentiating (\ref{76}) with respect to $a$ and setting $a=1$, we derive the system
\begin{equation}
\label{75}
\begin{array}{rcl}
\displaystyle
\sum_{j=1}^{n}x_j\frac{\partial f_{k}}{\partial x_j}&=&\lambda f_{k}+f_{k-1}, \\
\displaystyle
\sum_{j=1}^{n}x_j\frac{\partial f_{k-1}}{\partial x_j}&=&
\lambda f_{k-1}+f_{k-2}, \\
\hdotsfor{3} \\
\displaystyle
\sum_{j=1}^{n}x_j\frac{\partial f_{1}}{\partial x_j}&=&\lambda f_{1}+f_{0}, \\
\displaystyle
\sum_{j=1}^{n}x_j\frac{\partial f_{0}}{\partial x_j}&=&\lambda f_{0}. \\
\end{array}
\end{equation}
It is clear that system (\ref{75}) implies (\ref{75-1}).

Conversely, let $f_{k}\in {\cD}'(\bR^n)$ be a distribution satisfying
system (\ref{75-1}). Denoting
$$
\bigg(\sum_{j=1}^{n}x_j\frac{\partial}{\partial x_j}-\lambda\bigg)^{s}f_{k}=f_{k-s},
\quad s=1,2,\dots,k,
$$
we obtain $\big(\sum_{j=1}^{n}x_j\frac{\partial}{\partial x_j}-\lambda\big)f_{0}=0$,
$\big(\sum_{j=1}^{n}x_j\frac{\partial}{\partial x_j}-\lambda\big)f_{1}=f_{0}$,
\dots,
$\big(\sum_{j=1}^{n}x_j\frac{\partial}{\partial x_j}-\lambda\big)f_{k-1}=f_{k-2}$.
Thus, there are distributions $f_{j}\in {\cD}'(\bR^n)$,
$j=0,1,2,\dots,k-1$ such that system (\ref{75}) holds. Now we prove by
induction that $f_{k}$ is a QAHD of degree $\lambda$ and of order~$k$.

For $k=0$ this statement follows from Theorem~\ref{th5}.
If $k=1$ then the following system of equations
\begin{equation}
\label{76.1}
\sum_{j=1}^{n}x_j\frac{\partial f_{1}}{\partial x_j}=\lambda f_{1}+f_{0},
\quad
\sum_{j=1}^{n}x_j\frac{\partial f_{0}}{\partial x_j}=\lambda f_{0}
\end{equation}
holds. Here, in view of Theorem~\ref{th5}, the second equation in
(\ref{76.1}) implies that $f_{0}$ is a HD.

Consider the function
$$
g_1(a)=f_1(ax_1,\dots,ax_n)-a^{\lambda}f_{1}(x)-a^{\lambda}\log{a} f_{0}(x).
$$
It is clear that $g_1(1)=0$. By differentiation
we have
\begin{equation}
\label{76.2}
g_1'(a)=\sum_{j=1}^{n}x_j\frac{\partial f_1}{\partial x_j}(ax_1,\dots,ax_n)
-\lambda a^{\lambda-1}f_{1}(x)-(\lambda a^{\lambda-1}\log{a}
+a^{\lambda-1})f_{0}(x)
\end{equation}
Applying the first relation in (\ref{76.1}) to the arguments $ax_1,\dots,ax_n$
we find that
\begin{equation}
\label{76.3}
\sum_{j=1}^{n}x_j\frac{\partial f_{1}}{\partial x_j}(ax_1,\dots,ax_n)=
\frac{\lambda}{a}f_{1}(ax_1,\dots,ax_n)+\frac{1}{a}f_{0}(ax_1,\dots,ax_n). \\
\end{equation}
Substituting (\ref{76.3}) into (\ref{76.2}) and taking into account that
$\frac{1}{a}f_{0}(ax_1,\dots,ax_n)=a^{\lambda-1}f_{0}$, we find that $g_1(a)$
satisfies the differential equation with the initial data
\begin{equation}
\label{76.4}
g_1'(a)=\frac{\lambda}{a}g_1(a), \qquad g_1(1)=0.
\end{equation}
Obviously, its solution is $g_1(a)=0$. Thus
$g_1(a)=f_1(ax_1,\dots,ax_n)-a^{\lambda}f_{1}(x)
-a^{\lambda}\log{a} f_{0}(x)=0$, i.e, $f_1(x)$ is a QAHD of order $k=1$.

Let us assume that for $k-1$ the theorem holds, i.e., if $f_{k-1}$
satisfies all equations in (\ref{75}) except the first one, then
$f_{k-1}$ is a QAHD of degree $\lambda$ and of order~$k-1$.

Now, we suppose that there exist distributions $f_{k-1},\dots,f_{0}$
such that (\ref{75}) holds. Note that in view of our assumption,
$f_{k-1}$ is a QAHD of order $k-1$.

Consider the function
\begin{equation}
\label{76.5*}
g_k(a)=f_k(ax_1,\dots,ax_n)-a^{\lambda}f_{k}(x)-a^{\lambda}\log{a} f_{k-1}(x).
\end{equation}
It is clear that $g_k(1)=0$. By differentiation we have
\begin{equation}
\label{76.5}
g_k'(a)=\sum_{j=1}^{n}x_j\frac{\partial f_k}{\partial x_j}(ax_1,\dots,ax_n)
-\lambda a^{\lambda-1}f_{k}(x)
-(\lambda a^{\lambda-1}\log{a}+a^{\lambda-1})f_{k-1}(x)
\end{equation}
Applying the first relation in (\ref{75}) to the arguments $ax_1,\dots,ax_n$
we find that
\begin{equation}
\label{76.6}
\sum_{j=1}^{n}x_j\frac{\partial f_{k}}{\partial x_j}(ax_1,\dots,ax_n)=
\frac{\lambda}{a}f_{k}(ax_1,\dots,ax_n)+\frac{1}{a}f_{k-1}(ax_1,\dots,ax_n). \\
\end{equation}
Substituting (\ref{76.6}) into (\ref{76.5}) and taking into account that
according to our assumption, $f_{k-1}$ is a QAHD of order $k-1$, i.e.,
$$
U_{a}f_{k-1}(x)=f_{k-1}(ax_1,\dots,ax_n)=a^{\lambda}f_{k-1}(x)
+\sum_{r=1}^{k-1}a^{\lambda}\log^r{a} f_{k-1-r}^{(k-1)}(x),
$$
where $f_{k-1-r}^{(k-1)}(x)$ is a QAHD of order $k-1-r$, \ $r=1,2,\dots,k-1$,
we find that $g_k(a)$ satisfies the linear differential equation
\begin{equation}
\label{76.7}
g_k'(a)=\frac{\lambda}{a}g_k(a)
+\sum_{r=1}^{k-1}a^{\lambda-1}\log^r{a} f_{k-1-r}^{(k-1)}(x), \qquad g_1(1)=0.
\end{equation}
Now it is easy to see that its general solution has the form
$$
g_k(a)=\sum_{r=1}^{k-1}a^{\lambda}\log^{r+1}{a}\frac{ f_{k-1-r}^{(k-1)}(x)}{r+1}
+a^{\lambda}C(x),
$$
where $C(x)$ is a distribution. Taking into account that $g_1(1)=0$, we
calculate $C(x)=0$. Thus
\begin{equation}
\label{76.8}
g_k(a)=\sum_{r=1}^{k-1}a^{\lambda}\log^{r+1}{a}
\frac{ f_{k-1-r}^{(k-1)}(x)}{r+1}.
\end{equation}

By substituting (\ref{76.8}) into (\ref{76.5*}), we find
\begin{equation}
\label{76.9}
U_{a}f_k(x)=a^{\lambda}f_{k}(x)-a^{\lambda}\log{a} f_{k-1}(x)
+\sum_{r=2}^{k}a^{\lambda}\log^{r}{a}\frac{ f_{k-r}^{(k-1)}(x)}{r},
\end{equation}
where by our assumption $f_{k-1}$ is a QAHD of order $k-1$, and,
consequently, $f_{k-r}^{(k-1)}(x)$ is a QAHD of order $k-r$, \ $r=2,\dots,k$.
Thus, in view of Definition~\ref{de9}, $f_k$ is a QAHD of order $k$.

By the induction axiom, the theorem is proved.
\end{proof}

Theorem~\ref{th6-1} is a special case of Proposition~2.31 from Grudzinski's book~\cite{Gr}.

\section{Conclusion}\label{s4}

Theorems~\ref{th-1-spectr},~\ref{th-2-spectr},~\ref{th6-1} imply the following statement.

\begin{theorem}
\label{th6-3}
Let $f\in {\cD}'(\bR^n\setminus\{0\})$.
The following statements are equivalent

$(i)$ $f_k\in {\cD}'(\bR^n)$ is a QAHD of degree~$\lambda$ and of order $k$
{\rm(}in a sense of Definition~\ref{de9}{\rm)}, i.e.,
$$
U_{a}f_k(x)=f_{k}(ax)
=a^{\lambda}f_{k}(x)+\sum_{r=1}^{k}a^{\lambda}\log^r{a} f_{k-r}(x),
\quad x\in \bR^n,
$$
where $f_{k-r}(x)$ is a QAHD of degree $\lambda$ and of order $k-r$, \ $r=1,2,\dots,k$;

$(ii)$ $\big(U_a-a^\lambda I\big)^{k+1}f_k(x)=0$;

$(iii)$ $\big(\sum_{j=1}^{n}x_j\frac{\partial}{\partial x_j}-\lambda\big)^{k+1}f_{k}(x)=0$;

$(iv)$ $f_k(x)=r^\lambda\sum_{j=0}^{k}h_j(x)\log^jr$, where $h_j$ are homogeneous distributions,
$j=0,1,\dots,k$; \, $k\in \bZ_+$.
\end{theorem}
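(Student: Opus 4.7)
The plan is to assemble Theorem~\ref{th6-3} as a short chain of equivalences, invoking the three structural theorems that have already been established. No new analysis should be required; everything reduces to unpacking definitions and citing what is on the table.

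First I would handle $(i)\Leftrightarrow(ii)$. By the definition of the spectral subspace $S_k(\lambda)=\bigcap_{a>0}\ker(U_a-a^\lambda I)^{k+1}$ given at the start of Section~\ref{s2.1}, statement $(ii)$ is literally the assertion $f_k\in S_k(\lambda)$. Theorem~\ref{th-2-spectr} then provides exactly the equivalence between membership in $S_k(\lambda)$ and being a QAHD of degree $\lambda$ and order $k$ in the sense of Definition~\ref{de9}; this is $(i)$. (As a sanity check, the ``$(i)\Rightarrow(ii)$'' direction is also the content of Proposition~\ref{prop6-2}, so we are being consistent with two independent proofs already in the paper.)

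Next, $(i)\Leftrightarrow(iii)$ is precisely Theorem~\ref{th6-1}, which characterizes a QAHD of degree $\lambda$ and order $k$ by the Euler-type equation $\bigl(\sum_{j=1}^{n}x_j\partial_{x_j}-\lambda\bigr)^{k+1}f_k=0$. There is nothing further to verify for $k\ge1$; for $k=0$ the equivalence degenerates to the classical Euler theorem (Theorem~\ref{th5}), so the full range $k\in\bZ_+$ is covered.

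Finally, for $(i)\Leftrightarrow(iv)$ I would chain through the spectral subspace again: Theorem~\ref{th-2-spectr} identifies QAHDs of degree $\lambda$ and order $k$ with the elements of $S_k(\lambda)$, and Theorem~\ref{th-1-spectr} gives the explicit structural representation $f_k(x)=r^\lambda\sum_{j=0}^{k}h_j(x)\log^jr$ with each $h_j$ homogeneous as a necessary and sufficient condition for membership in $S_k(\lambda)$. Composing the two yields $(i)\Leftrightarrow(iv)$.

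Since the three prior theorems do all the work, there is no real obstacle; the only thing to be careful about is bookkeeping on the domain. The hypothesis $f\in\cD'(\bR^n\setminus\{0\})$ is the setting in which Theorems~\ref{th-1-spectr} and~\ref{th-2-spectr} were stated, while Theorem~\ref{th6-1} is formulated on $\bR^n$; so I would simply note that the statement above is to be read on $\bR^n\setminus\{0\}$, which is what the opening hypothesis of the theorem already imposes, and then the three equivalences close up into the stated four-way equivalence.
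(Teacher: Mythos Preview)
Your proposal is correct and matches the paper's approach exactly: the paper does not give a separate proof of Theorem~\ref{th6-3} but simply states that it follows from Theorems~\ref{th-1-spectr}, \ref{th-2-spectr}, and~\ref{th6-1}, which is precisely the chain of equivalences you spell out. Your additional remarks on the domain bookkeeping and the $k=0$ case via Theorem~\ref{th5} are more explicit than anything in the paper itself.
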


\end{document}